\newcommand{\no}{\noindent}
\newtheorem{theo}{Theorem}[section]
\newtheorem{lemma}[theo]{Lemma}
\newtheorem{defi}[theo]{Definition}
\newtheorem{coro}[theo]{Corollary}
\newtheorem{conj}[theo]{Conjecture}
\newtheorem{claim}[theo]{Claim}
\newtheorem{quest}[theo]{Question}
\newcommand{\QQ}{\mathbb{Q}} 
\newcommand{\NN}{\mathbb{N}}
\newcommand{\FF}{{\mathcal{F}}}  
\newcommand{\UU}{{\mathcal{U}}}  
\newcommand{\PP}{{\mathbb P}}
\newcommand{\SSS}{{\mathcal{S}}}  
\newcommand{\bs}{\,\backslash\,}
\begin{document}
\author{Jeremy Chizewer\thanks{
Department of Combinatorics and Optimization, University of Waterloo,
Waterloo, ON, CA, N2L 3G1. Department of Computer Science, Princeton University,
Princeton, NJ 08544, and Research supported in part by the Fifty Five Fund for Senior Thesis 
Research (Class 1955) Fund, Princeton University.
Email: {\tt jchizewer@uwaterloo.ca}.}
}

\title{On Restricted Intersections and the Sunflower Problem}
\date{}
\maketitle

\begin{abstract}
A sunflower with $r$ petals is a collection of $r$ sets over a ground set $X$ such that every 
element in $X$ is in no set, every set, or exactly one set. Erd\H{o}s and Rado~\cite{er} showed that a 
family of sets of size $n$ contains a sunflower if there are more than $n!(r-1)^n$ sets in the family. 
Alweiss et al.~\cite{alwz} and subsequently Rao~\cite{rao} and Bell et al.~\cite{bcw} improved this bound to $(O(r \log(n))^n$. 

We study the case where the pairwise intersections of the set family are restricted. In particular, we 
improve the best-known bound for set families when the size of the pairwise intersections of any 
two sets is in a set $L$.  We also present a new bound for the special case when the set $L$ is the 
nonnegative integers less than or equal to $d$ using the techniques of Alweiss et al.~\cite{alwz}.
\end{abstract}


\section{Introduction}

A \emph{set family} $\FF$ over a finite set $X$ is a collection of subsets of $X$.
We say a set family is \emph{$n$-uniform} if every set in the family has size $n$. 
\begin{defi}[Sunflower] An $r$-sunflower is a collection of sets $S_1,\dots, S_r$ such
that \[S_i\cap S_j=S_1\cap S_2\cap\cdots\cap S_r=K \text{ for all $i\neq j$.}\]
We call the set $K$ the \emph{core} and the sets $S_i\backslash K$ the \emph{petals.}
\end{defi}

Erd\H{o}s and Rado~\cite{er} originally referred to sunflowers as $\Delta$-systems 
and proved that given an $n$-uniform set family $\mathcal F$ with 
$|\mathcal {F}| =n!(r-1)^n$ there exists an $r$-sunflower contained in $\mathcal F$.
Sunflowers were renamed by Deza and Frankl in~\cite{df},
and the term sunflower is now more popular. The sunflower problem has been studied in
several papers including \cite{er, deza, df, alwz, rao, gh}.

Erd\H{os} and Rado conjectured that a stronger bound holds than the one they proved in the 
initial paper.
\begin{conj}[Erd\H{o}s and Rado~\cite{er}]\label{erdos-conj}
Let $\FF$ be an $n$-uniform set family. There exists some constant $C=C(r)$ depending
only on $r$ such that $\FF$ contains an $r$-sunflower whenever 
\[|\FF|> C^{\, n}.\]
\end{conj}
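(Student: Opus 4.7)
The statement in question is the original Erd\H{o}s--Rado sunflower conjecture, which has been open since 1960, so my ``plan'' is really a plan to push as hard as possible along the most successful line of attack, namely the encoding/spread-families framework of Alweiss, Lovett, Wu, and Zhang cited in the introduction. Their approach replaces the direct pigeonhole of Erd\H{o}s--Rado with a two-part strategy: first reduce to an $R$-spread family (one in which no nonempty set $Y$ satisfies $|\{S\in\FF : Y\subseteq S\}| \geq |\FF|/R^{|Y|}$), then show that any $R$-spread $n$-uniform family with $R = O(r\log n)$ contains an $r$-sunflower. Plugging in gives the current best $(O(r\log n))^n$, one $\log n$ factor away from the conjecture.

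Under this plan the first step is the spreadness reduction: given $\FF$ with $|\FF|>C(r)^n$, one repeatedly finds a set $Y$ violating spreadness, passes to the link $\FF_Y = \{S\bs Y : Y\subseteq S\in\FF\}$, and argues that the process terminates in a uniform family which is $R$-spread for the desired $R$. The second step is the core probabilistic argument: one samples $r$ random ``pseudo-petals'' — small uniformly random subsets of the ground set — and shows via a union bound and an encoding lemma that with positive probability they are pairwise disjoint and each is extendable to a set in $\FF$, yielding a sunflower. The only place where $\log n$ enters is the encoding step, where each element of the ground set costs roughly $\log(Rn)$ bits to describe relative to the random restriction.

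The main obstacle, and the reason this is still a conjecture, is precisely that $\log n$ factor. To reach $C(r)^n$ one needs an encoding that uses $O_r(1)$ bits per element on average, which seems to require either an adaptive encoding exploiting local structure in $\FF$, or a completely new certificate for the absence of a sunflower that avoids independent random pseudo-petals altogether. Concretely, I would try replacing the uniform random restriction by one weighted by a fractional cover of $\FF$, and attempt to amortize the encoding cost over many coordinates via an entropy-compression argument in the style of Moser--Tardos. I expect such a program to at best improve the constant multiplying $\log n$; a full proof of the conjecture almost certainly requires a qualitatively new structural idea, and I would not claim a credible route to it within the present framework.
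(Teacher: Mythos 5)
You have correctly identified that this statement is the Erd\H{o}s--Rado sunflower conjecture, which is open; the paper states it as Conjecture~\ref{erdos-conj} and offers no proof, so there is nothing to compare your attempt against. Your summary of the Alweiss--Lovett--Wu--Zhang spread-families approach and of where the $\log n$ loss enters is accurate, and your refusal to claim a proof is the right call. One connection worth noting: the paper's own contribution toward the conjecture goes through the equivalence with Question~\ref{lin-int-quest} (an $r$-sunflower-free family of size $C^{\,n}$ must contain two sets with intersection of size $\Omega(n)$), and Corollary~\ref{intcoro} establishes the weaker bound $cn/\log\log n$ via Theorem~\ref{intersecttheo}; if you want to attack the conjecture within this paper's framework, improving that intersection lower bound to linear in $n$ is the concrete target.
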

Recently, Alweiss et al.~\cite{alwz}, and subsequently Rao~\cite{rao} and Bell et al.~\cite{bcw}, made progress toward this 
bound by showing there exists some constant $C$ such that  $\FF$ contains an $r$-sunflower 
whenever $|\FF|> [Cr\log(n)]^n$. We study the Sunflower Problem with added restrictions on the 
pairwise intersections.

\begin{defi}
Let $\FF$ be a set family. We call $\FF$ an \emph{$L$-intersecting family}, 
if there exists a set $L\subset\NN$ such that 
\[|F_i\cap F_j|\in L \textrm{ for every } F_i,F_j\in \FF,\,\textrm{with } i\neq j.\]
\end{defi}

The problem of sunflowers in $L$-intersecting set families was first studied in~\cite{gh}. 
They show that given an $L$-intersecting set family $\FF$ with $|L|=s$, the family $\FF$
contains a $3$-sunflower whenever $|\FF|> (n^2-n+1)8^{s-1}2^{(1+\sqrt{5}/5)n(s-1)}$. 
We improve their bound, and extend the result to all 
$r\geq 3$ in the following theorem, which is one of our main results.

\begin{theo}\label{l-int}
Let $\FF$ be an $L$-intersecting, $n$-uniform set family, for some set $L\subset\NN$ with 
${|L|=s\geq 1}$. Let $m = \max\{r-1, n^2 - n+1\}$. Then $\FF$ contains an $r$-sunflower whenever
\[|\FF|> 2^{n\log_2(s+1) + s\log_2 (m)}.\]
\end{theo}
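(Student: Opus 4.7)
I plan to proceed by induction on $s=|L|$, taking Deza's classical theorem as the base case and designing an inductive step that reduces $|L|$ by one. When $s=1$, Deza's theorem states that an $n$-uniform family with all pairwise intersections of a single size and more than $n^2-n+1$ sets must be a sunflower, so the hypothesis $|\FF|>m$ (and a fortiori $|\FF|>2^n m$) already produces an $r$-sunflower.

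For the inductive step, write $L=\{\lambda_1<\lambda_2<\cdots<\lambda_s\}$ and assume the result for intersection sets of size $s-1$. The goal is to locate a set $F_0\in\FF$ and a subset $A\subseteq F_0$ with $|A|\ge\lambda_2$ for which the sub-family $\FF_A:=\{F\in\FF: F\cap F_0=A\}$ is large. The reduced collection $\{F\setminus A:F\in\FF_A\}$ is then $(n-|A|)$-uniform and its pairwise intersection sizes lie in $L_A:=\{\lambda-|A|:\lambda\in L,\ \lambda\ge|A|\}$, which has at most $s-1$ elements because $|A|\ge\lambda_2$ forces $\lambda_1$ out of $L_A$. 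Any $r$-sunflower in the reduced family with core $K$ lifts to an $r$-sunflower in $\FF_A\subseteq\FF$ with core $A\cup K$, so the inductive hypothesis applied with parameters $(n-|A|,s-1)$ completes the step.

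To produce such $F_0$ and $A$, I would build the auxiliary graph $H$ on vertex set $\FF$ whose edges are the pairs $\{F,F'\}$ with $|F\cap F'|>\lambda_1$, and branch on its structure. If $H$ has an independent set $I$ of size greater than $m$, then every pair in $I$ intersects in exactly $\lambda_1$ elements, so Deza's theorem applied to $I$ already yields an $r$-sunflower. Otherwise a Tur\'an-type bound forces a vertex $F_0$ of high degree in $H$; a first pigeonhole on the value $|F\cap F_0|\in\{\lambda_2,\ldots,\lambda_s\}$ followed by a second pigeonhole on the actual subset $A=F\cap F_0$ yields the desired $\FF_A$.

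The main obstacle will be the careful book-keeping of multiplicative losses so that they fit the slack the bound allows. The two pigeonholes and the Tur\'an step naively cost a combined factor on the order of $2^n m$ per reduction of $s$, whereas the inductive gap $(s+1)^n m^s/(s^{n-|A|}m^{s-1})$ is only $((s+1)/s)^n \cdot s^{|A|}\cdot m$. Closing the gap will require a sharper count of the number of possible intersection patterns $A$---exploiting that the sets $A$ themselves come from an $L$-intersecting family on $F_0$ and so cannot be arbitrary---and using the choice of base $(s+1)$ rather than $2$ in the exponential $(s+1)^n$ to absorb the dependence on which $\lambda_i$ the pigeonhole lands on. This refined counting, together with the separate treatment of small versus large $|A|$, is where I expect the technical heart of the proof to lie.
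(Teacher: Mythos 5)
Your skeleton matches the paper's for the first half: induction on $s$ with Deza's theorem as the base case, a maximal subfamily with pairwise intersections equal to $\lambda_1$ (your independent set in $H$), and a pigeonhole/Tur\'an step producing a set $F_0$ meeting many members of $\FF$ in more than $\lambda_1$ elements. The reduction you describe (passing to $\{F\setminus A\}$ and lifting a sunflower back) is also structurally sound. The genuine gap is exactly where you flag it: conditioning on the \emph{exact} intersection $A=F\cap F_0$ is too lossy, and the ``sharper count'' you hope for does not exist. The sets $A$ need not form an $L$-intersecting family on $F_0$ (since $A\cap A'=F\cap F'\cap F_0$ is unconstrained), and the number of possible $A$'s of size $\lambda_i$ can genuinely be $\binom{n}{\lambda_i}$. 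If you pigeonhole on the size $\lambda_i$ first and then on $A$, the cost per step is about $(s-1)\binom{n}{\lambda_i}m$ against an available gap of $\bigl(\tfrac{s+1}{s}\bigr)^n s^{\lambda_i}m$; since $\max_p\bigl[H(p)-p\log_2 s\bigr]=\log_2\tfrac{s+1}{s}$ (attained at $p=1/(s+1)$), the exponential rates coincide \emph{exactly}, with zero slack to absorb the factor $s-1$ or any other loss. Near $\lambda_i\approx n/(s+1)$ the argument therefore fails outright once $s$ is moderately large, so this is not mere book-keeping.

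The missing idea is to link not at the full intersection but at a fixed small subset. Every $F$ with $|F\cap F_0|>\lambda_1$ contains some $(\lambda_1+1)$-element subset $S'\subseteq F_0$, so a pigeonhole over only $\binom{n}{\lambda_1+1}$ choices yields $S'$ whose link $\FF'_{S'}=\{F\setminus S':S'\subseteq F\}$ is large. Crucially, any two sets containing $S'$ intersect in more than $\lambda_1$ elements, so the link is $(n-\lambda_1-1)$-uniform and $L'$-intersecting with $L'=\{\lambda_2-\lambda_1-1,\dots,\lambda_s-\lambda_1-1\}$, $|L'|=s-1$, and a sunflower in the link lifts by adjoining $S'$. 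To make the losses telescope you should also strengthen the induction hypothesis to the multinomial form
\[
|\FF|>\frac{n!\,m^{s}}{(\lambda_1+1)!\,(\lambda_2-\lambda_1)!\cdots(\lambda_s-\lambda_{s-1})!\,(n-\lambda_s-1)!},
\]
whose per-step cost $m\binom{n}{\lambda_1+1}$ cancels exactly, and only at the end apply $\binom{n}{m_1,\dots,m_{s+1}}\leq (s+1)^n$ to recover the stated bound $2^{n\log_2(s+1)+s\log_2 m}$. This is the route the paper takes.
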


We also consider the special case where $L = \{0,1,\dots,d\}$ for some $d\in\NN$. In this case, we 
call the set family  \emph{$d$-intersecting}. Using the techniques of \cite{alwz,rao} we achieve the
following bound, our second main result.

\begin{theo}\label{intersecttheo} Let $\FF$ be a $d$-intersecting, $n$-uniform set family. 
There exists an absolute constant $C$ such that for every $r,n\geq 3$, the family 
$\FF$ contains an $r$-sunflower whenever
\[|\FF|> (4r)^n[Cr\log (rd)]^d.\]
\end{theo}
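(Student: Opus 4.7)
The plan is to adapt the Alweiss-Rao-Bell strategy of peeling a common core $Z$ to obtain a spread link family, then extracting $r$ pairwise disjoint sets from the link via a random-restriction argument. The $d$-intersecting hypothesis enters in two places: first, the peeled core $Z$ can be taken of size at most $d$, so the peeling costs only a factor of $\kappa^d$ (rather than $\kappa^n$) in the family size; second, inside the link, the spread parameter needed to run the disjoint-sets lemma drops from $\Theta(r\log n)$ to $\Theta(r\log(rd))$, because the intersection patterns that can obstruct disjointness now live on subsets of size at most $d$.

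Concretely, choose an absolute constant $C$, set $\kappa=Cr\log(rd)$, and suppose $|\FF|>(4r)^n\kappa^d$. Among subsets $Z\subseteq X$ with $|Z|\leq d$, pick one maximizing $|\FF_Z|\,\kappa^{|Z|}$, where $\FF_Z=\{F\in\FF:Z\subseteq F\}$. The feasible candidate $Z=\emptyset$ attains value $|\FF|$, so $|\FF_Z|\geq\kappa^{-|Z|}|\FF|\geq\kappa^{-d}|\FF|>(4r)^n$. Form the link family $\GG=\{F\setminus Z:F\in\FF_Z\}$. It is $(n-|Z|)$-uniform and $(d-|Z|)$-intersecting, since every pairwise intersection of members of $\FF_Z$ contains $Z$ and has size at most $d$. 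Maximality of $Z$ yields the partial spread bound $|\GG_Y|\leq\kappa^{-|Y|}|\GG|$ for every $Y\subseteq X\setminus Z$ with $|Y|\leq d-|Z|$; for $|Y|>d-|Z|$ the $(d-|Z|)$-intersecting hypothesis forces $|\GG_Y|\leq 1$ automatically. Producing $r$ pairwise disjoint members $G_1,\dots,G_r\in\GG$ then yields the $r$-sunflower $Z\cup G_1,\dots,Z\cup G_r$ in $\FF$ with core $Z$.

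The main obstacle is extracting those $r$ disjoint sets, which reduces to the following $d$-intersecting analogue of the Alweiss-Rao spread-to-disjoint lemma: if $\HH$ is $m$-uniform, $e$-intersecting, and satisfies $|\HH_Y|\leq\kappa^{-|Y|}|\HH|$ for every $Y$ with $|Y|\leq e$, with $\kappa\geq Cr\log(rd)$ and $|\HH|>(4r)^m$, then $\HH$ contains $r$ pairwise disjoint sets. I would prove this by adapting Rao's random-restriction argument: sample a random subset $W$ of the ground set keeping each element with an appropriate probability tied to $\kappa$, show that with positive probability $W$ contains some $H\in\HH$, and couple $r$ such random sets to obtain pairwise disjoint members. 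The failure probability is controlled by a union bound over bad partial intersection patterns; in the original Alweiss-Rao argument this bound ranges over subsets of an $m$-element ground set and produces a $\log m$ factor, but under the $e$-intersecting hypothesis the obstructing patterns are confined to subsets of size at most $e\leq d$, yielding only $\log(rd)$ after incorporating the $r$-fold independence. Verifying that the partial spread hypothesis suffices to drive this argument and that the slack $(4r)^m$ suffices to push the success probability for $r$ trials above zero is the technical crux.
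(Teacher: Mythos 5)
Your outer architecture matches the paper's: peel a maximal core $T$ of size at most $d$ to get a link family that is $\kappa$-spread up to sets of size $d-|T|$ (with links at larger sets trivial by the intersection hypothesis), extract $r$ pairwise disjoint sets from the link, and lift them to a sunflower with core $T$. That part is correct, and your target lemma for the link family is exactly what the paper establishes. But the step you defer as ``the technical crux'' is the entire content of the paper's new contribution, and your sketch of how to handle it is not the right mechanism. The $\log$ factor in the Alweiss--Lovett--Wu--Zhang/Rao argument is tied to the \emph{uniformity} of the family being covered (one needs roughly $\log m$ rounds of refinement, or equivalently the encoding length scales with $m$), not to a union bound over ``obstructing intersection patterns.'' Your link family is still $(n-|T|)$-uniform with $n-|T|$ possibly close to $n$, so simply asserting that the bad events live on subsets of size at most $d$ does not explain why a spread parameter of $Cr\log(rd)$ rather than $Cr\log n$ suffices; as written, the argument would still pay $\log n$.

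What actually closes the gap in the paper is a single-shot random restriction (Lemma~\ref{good-size-d}): draw one random set $W$ of density $p=1/(2r)$ and show that for all but a $\delta$-fraction of $S\in\FF$ there is a witness $S'\in\FF$ with $S'\bs W\subseteq S\bs W$ and $|S'\bs W|\leq d$. The count of bad pairs $(W,S)$ is bounded by the encoding $(W,S)\mapsto(W\cup S,\,W\cap S)$, and the $d$-intersecting hypothesis is used exactly to make this encoding injective: any $S'\subseteq W\cup S$ with $|S'\bs W|>d$ satisfies $|S'\cap S|>d$ and hence $S'=S$, so $S$ is recoverable from $W\cup S$. This one step collapses the effective uniformity from $n$ to $d$, and only then is Rao's spread lemma invoked --- on a $d$-uniform residual family --- which is where $\log(rd)$ legitimately enters. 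The cost of the restriction is the $(2/p)^n=(4r)^n$ Markov term, which is why the theorem's hypothesis carries the factor $(4r)^n$. Your proposal contains neither the witness/encoding argument nor the reduction to a $d$-uniform family, so the lemma you need remains unproved.
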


\begin{coro}\label{intcoro}
For any $C>1$, there exists $c=c(r)>0$ depending only on $C$ and $r$ such that if 
$\FF$ is an $n$-uniform, $r$-sunflower free family with 
$|\FF|\geq (C4r)^n$ for $3\leq r\leq \log n$ then there exists $F_1,F_2\in\FF$ such that 
\[|F_1\cap F_2| \geq c n/\log\log n.\]
\end{coro}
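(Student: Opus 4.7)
The plan is to derive the corollary from Theorem~\ref{intersecttheo} by a contrapositive argument. Let $C_0$ denote the absolute constant appearing in Theorem~\ref{intersecttheo}. Suppose, for contradiction, that no pair $F_1, F_2 \in \FF$ satisfies $|F_1 \cap F_2| \geq c n/\log\log n$, where $c = c(C,r) > 0$ is to be chosen. Setting $d := \lceil c n / \log \log n \rceil$, the family $\FF$ becomes $d$-intersecting in the sense of the paper.

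Since $\FF$ is $r$-sunflower-free by hypothesis, Theorem~\ref{intersecttheo} forces
\[|\FF| \;\leq\; (4r)^n \bigl[C_0\, r\, \log(rd)\bigr]^d.\]
Combined with the lower bound $|\FF| \geq (4Cr)^n$, dividing through by $(4r)^n$ and taking logarithms yields
\[ n \log C \;\leq\; d \cdot \log\bigl[C_0\, r\, \log(rd)\bigr]. \]

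Next I would estimate the right-hand side using the hypotheses $r \leq \log n$ and $d \leq 2cn/\log\log n$ (valid for all sufficiently large $n$, which holds automatically once $r \geq 3$ and $r \leq \log n$ force $n$ large). Then $rd \leq 2c n$, so $\log(rd) \leq 2\log n$ for large $n$, and therefore $\log\bigl[C_0\, r\, \log(rd)\bigr] \leq \log C_0 + \log \log n + \log(2\log n) \leq K \log \log n$ for some absolute constant $K$ and all sufficiently large $n$. Substituting into the displayed inequality,
\[ n \log C \;\leq\; \frac{2cn}{\log\log n} \cdot K \log\log n \;=\; 2Kc\, n, \]
so that $\log C \leq 2Kc$. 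Choosing $c := c(C,r) < \log(C)/(2K)$ (and small enough to absorb the finitely many small-$n$ cases permitted by the restriction $r \leq \log n$) produces the required contradiction, completing the proof.

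The argument is essentially elementary given Theorem~\ref{intersecttheo}; the only delicate point, and the one that motivates the hypothesis $r \leq \log n$, is the double-logarithm bookkeeping. Specifically, we need $\log[C_0 r \log(rd)] = O(\log \log n)$ so that it cancels exactly against the $\log \log n$ denominator hidden in $d$; if $r$ were allowed to grow faster than $\log n$, the term $\log r$ alone would overwhelm this cancellation and no valid choice of $c$ would exist.
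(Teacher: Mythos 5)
Your proposal is correct and matches the paper's intent exactly: the paper proves the corollary in one line by applying Theorem~\ref{intersecttheo} with $d = cn/\log\log n$, and you have simply filled in the contrapositive bookkeeping. The one small slip — $r \leq \log n$ and $d \leq 2cn/\log\log n$ give $rd \leq 2cn\log n/\log\log n$ rather than $rd \leq 2cn$ — is harmless, since the bound $\log(rd) \leq 2\log n$ you actually use still holds for large $n$.
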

\no Corollary~\ref{intcoro} follows immediately from Theorem~\ref{intersecttheo} by setting $d=c n/\log\log n$.
The following question naturally arises.

\begin{quest}\label{lin-int-quest}
Do there exist constants $C,c>0$ depending only on $r$ and possibly each other 
such that if $\FF$ is an $n$-uniform, $r$-sunflower free family of size $|\FF|\geq C^{\, n}$, then 
there exist $F_1,F_2\in\FF$ such that $|F_1\cap F_2|\geq cn$?
\end{quest}

Corollary~\ref{intcoro} is of interest because the statement in Question~\ref{lin-int-quest} 
is equivalent to Conjecture~\ref{erdos-conj}. 

The remainder of the paper is organized as follows. Theorem~\ref{l-int} will be proved in 
Section~\ref{l-int-section}, and Theorem~\ref{intersecttheo} in Section~\ref{intersect}. 
We briefly discuss the results Section~\ref{discussion}.


\section{Proof of Theorem~\ref{l-int}}\label{l-int-section}

We proceed using similar ideas to the original proof of Erd\H{o}s and Rado~\cite{er}, 
beginning with the following lemma. 
\begin{lemma}[Deza~\cite{deza}]\label{deza-lemma}
Let $\FF$ be an $L$-intersecting, $n$-uniform set family where $L=\{t\}$ for some $0\leq t<n$. 
If $\FF\geq n^2-n+2$ then $\FF$ is a sunflower.\end{lemma}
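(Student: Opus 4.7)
The plan is a proof by contradiction. Assume $|\FF|\geq n^2-n+2$ and that $\FF$ is not a sunflower; I aim to deduce $|\FF|\leq n^2-n+1$. Fix any two sets $A,B\in\FF$ and set $K:=A\cap B$, so $|K|=t$. Note that $K$ is the only candidate for a sunflower core of $\FF$, since for any hypothetical core $K'$ the pair $A,B$ would force $K'=A\cap B=K$. In particular, if every $F\in\FF$ contained $K$, then $|F\cap A|=t=|K|$ would force $F\cap A=K$ for all $F$, and by the same reasoning every pairwise intersection in $\FF$ would equal $K$, making $\FF$ a sunflower with core $K$ — a contradiction. Hence $\FF_{\bar K}:=\{F\in\FF : K\not\subseteq F\}$ is nonempty.

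The next step is to bound $|\FF_{\bar K}|$ by a pair-association argument in the spirit of the original Erd\H{o}s--Rado bookkeeping. For each $F\in\FF_{\bar K}$, the $t$-set $F\cap A$ is distinct from $K$ (it misses at least one element of $K$) and hence contains some element $x_F\in A\setminus K$; symmetrically $F\cap B$ contains some $y_F\in B\setminus K$. This yields a map $F\mapsto(x_F,y_F)\in(A\setminus K)\times(B\setminus K)$, whose codomain has size $(n-t)^2\leq n(n-1)$. If this map is essentially injective, then $|\FF_{\bar K}|\leq n(n-1)$, and accounting for $A$ and $B$ (and noting that the sets in $\FF\setminus\FF_{\bar K}$ all contain $K$ and are therefore absorbed into a sunflower on $K$ together with $A,B$) gives $|\FF|\leq n(n-1)+1=n^2-n+1$, contradicting the hypothesis.

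The main obstacle, and the technical heart of the lemma, is proving the injectivity of $F\mapsto(x_F,y_F)$. For $t=1$ this is immediate: two distinct sets $F_1,F_2$ sharing a pair $(x,y)$ would satisfy $|F_1\cap F_2|\geq|\{x,y\}|=2>1=t$, directly contradicting $\{t\}$-intersectivity. For $t\geq 2$ the naive map has some multiplicity, and one must either choose $(x_F,y_F)$ canonically — for instance by fixing a total order on the ground set and taking lexicographically least witnesses — and then use the exact constraint $|F_1\cap F_2|=t$ together with the rigidity of the $t$-subsets $F_i\cap A$ and $F_i\cap B$ to promote agreement on the pair into agreement on all of $F_i$, or else augment the association with data from a third set $C\in\FF_{\bar K}$ (whose existence is guaranteed) to break remaining ties. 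The key point that makes this work is that the very tight pairwise intersection constraint $|F_1\cap F_2|=t$ leaves essentially no room for two distinct sets in $\FF_{\bar K}$ to share their canonical pair.
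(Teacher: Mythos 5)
The paper does not prove this lemma; it is quoted as a known result of Deza, so your proposal must stand on its own, and as written it has two genuine gaps. First, the injectivity of $F\mapsto(x_F,y_F)$ is the entire content of the theorem for $t\geq 2$, and you do not prove it — you only name two possible strategies. Neither works as described: if $F_1\neq F_2$ share the canonical pair $(x,y)$, all you can conclude is $|F_1\cap F_2|\geq 2$, which is perfectly consistent with $|F_1\cap F_2|=t$ once $t\geq 2$; choosing the witnesses lexicographically does not change this, since two sets can agree on their lex-least elements of $F\cap(A\setminus K)$ and $F\cap(B\setminus K)$ and still intersect in exactly $t$ points. The claim that the intersection constraint ``leaves essentially no room'' for such coincidences is an assertion, not an argument, and it is exactly where Deza's proof (which proceeds quite differently, via degree/counting or linear-algebraic arguments) does its work.

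Second, the final accounting is invalid even granting injectivity. You have $|\FF|=|\FF_{\bar K}|+|\{F\in\FF: K\subseteq F\}|$, and bounding the first summand by $(n-t)^2$ says nothing about the second. Saying the sets containing $K$ are ``absorbed into a sunflower on $K$'' is not a numerical bound: a huge sunflower with core $K$ together with one stray set not containing $K$ is precisely the kind of non-sunflower configuration you must exclude, and your argument as written would fail to bound it. This step is repairable — for any $F_0$ with $K\not\subseteq F_0$ one has $|F_0\cap(F\setminus K)|\geq 1$ for every $F\supseteq K$, and the sets $F\setminus K$ are pairwise disjoint, so at most $n$ members of $\FF$ contain $K$ — but you would need to supply it, and even then the argument only closes if the (unproven) injectivity gives the bound $(n-1)^2$ on $|\FF_{\bar K}|$.
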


Lemma~\ref{deza-lemma} provides the base case for our induction argument. We also state the
following definition, which will be useful in this section and the next one.

\begin{defi}
Given a family $\FF$ over $X$, and a set $T\subseteq X$ the ``link'' of $\FF$ at $T$,
denoted $\FF_T$, is defined as
\[\FF_T=\{F\bs T: F\in\FF, T\subseteq F\}.\]
\end{defi}

\begin{proof}[Proof of Theorem~\ref{l-int}]
We prove using induction a slightly stronger statement than that of the theorem. 
\begin{claim}\label{l-int-claim} Let $\FF$ be an $L$-intersecting, $n$-uniform set family for 
$L=\{\ell_1,\dots,\ell_s\}$ with ${0\leq \ell_1<\ell_2<\cdots<\ell_s<n}$ 
and $s\geq 1$. Let $m = \max\{r-1, n^2 - n+1\}$. Then $\FF$ contains an $r$-sunflower whenever 
\begin{align}\label{f-size}
|\FF|> \frac{n!m^{s}}{(\ell_1+1)!(\ell_2-\ell_1)!(\ell_3-\ell_2)!
\cdots(\ell_s-\ell_{s-1})!(n-\ell_s-1)!}.\end{align}
\end{claim}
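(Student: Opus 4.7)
The plan is to prove Claim~\ref{l-int-claim} by induction on $s$. The base case $s=1$ follows from Lemma~\ref{deza-lemma}: since $m\ge\max\{r-1,n^2-n+1\}$, the hypothesis $|\FF|>m\binom{n}{\ell_1+1}$ gives $|\FF|\ge n^2-n+2$, so $\FF$ is a sunflower by Deza's lemma, and $|\FF|>m\ge r-1$ ensures it has at least $r$ sets.

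For the inductive step, set $L'=\{\ell_2-\ell_1-1,\ldots,\ell_s-\ell_1-1\}$; a direct computation yields the recurrence
\[
B(n,L)=m\binom{n}{\ell_1+1}\,B(n-\ell_1-1,L'),
\]
where $B(\cdot,\cdot)$ denotes the bound in~\eqref{f-size}. It suffices to produce a set $T$ with $|T|=\ell_1+1$ and $|\FF_T|>B(n-\ell_1-1,L')$: then $\FF_T$ is $(n-\ell_1-1)$-uniform and $L'$-intersecting (any two sets containing $T$ intersect in at least $\ell_1+1$ elements, hence in $\{\ell_2,\ldots,\ell_s\}$), so by induction $\FF_T$ contains an $r$-sunflower, which lifts to an $r$-sunflower in $\FF$ by adjoining $T$ to each petal.

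To locate such a $T$, I would take a maximal subfamily $\mathcal{G}\subseteq\FF$ whose pairs all intersect in exactly $\ell_1$ elements. If $|\mathcal{G}|\ge m+1$, then $|\mathcal{G}|\ge n^2-n+2$, so Deza's lemma makes $\mathcal{G}$ a sunflower of at least $m+1\ge r$ sets and we are done. Otherwise $|\mathcal{G}|\le m$; by maximality each $F\in\FF\setminus\mathcal{G}$ has $|F\cap G|\ge\ell_2$ for some $G\in\mathcal{G}$, and pigeonholing this assignment over $\mathcal{G}$ gives some $G^*\in\mathcal{G}$ such that at least $(|\FF|-m)/m$ sets $F\in\FF$ satisfy $|F\cap G^*|\ge\ell_2$. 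The identity
\[
\sum_{T\subseteq G^*,\,|T|=\ell_1+1}|\FF_T|=\sum_{F\in\FF}\binom{|F\cap G^*|}{\ell_1+1}\ge\binom{n}{\ell_1+1}+\frac{|\FF|-m}{m}
\]
then averages over the $\binom{n}{\ell_1+1}$ choices of $T\subseteq G^*$ to yield some $T$ with $|\FF_T|>B(n-\ell_1-1,L')$, using the recurrence together with $|\FF|>B(n,L)$.

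The main obstacle is arranging the bookkeeping so that the loss in the inductive step is exactly $m\binom{n}{\ell_1+1}$. The fact that $m=\max\{r-1,n^2-n+1\}$ does double duty—it guarantees that the Deza sunflower has at least $r$ petals in the first subcase and caps the pigeonhole loss at $|\mathcal{G}|\le m$ in the second—is what makes the constants line up.
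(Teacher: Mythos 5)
Your proposal is correct and follows essentially the same route as the paper: induction on $s$ with Deza's lemma handling both the base case and the maximal family of pairwise-$\ell_1$-intersecting sets, then a pigeonhole to find a popular $(\ell_1+1)$-set $T$ whose link is $L'$-intersecting and large enough for the induction hypothesis. The only (immaterial) difference is bookkeeping: the paper assigns every set of $\FF$ (including members of $\SSS$) to a set of $\SSS$ it meets in $\ge\ell_1+1$ points to get $|\FF|/m$ directly, whereas you count only $\FF\setminus\mathcal{G}$ and recover the loss from the $\binom{n}{\ell_1+1}$ contribution of $G^*$ itself in the double count.
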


\begin{proof}[Proof of claim]
Fix $m$ as above.
We proceed by induction on $s$. Indeed, for $s=1$, we apply Lemma~\ref{deza-lemma} to get 
the result immediately. Suppose the result holds for $0<j<s$, and let $\FF$ be an $n$-uniform,
$L$-intersecting family (with $L$ as above) with $|L|=s\geq 2$. 
Suppose that $\FF$ satisfies Inequality~(\ref{f-size}).
Let $\SSS\subseteq \FF$ be a maximal subset of $\FF$ such that for every 
$S_i,S_j\in \SSS$, if $i\neq j$, then $|S_i\cap S_j|=\ell_1$. If $|\SSS|> m$ then,
by Lemma~\ref{deza-lemma}, $\SSS$ (and hence $\FF$) contains an $r$-sunflower. Thus,
without loss of generality, we can assume that $|\SSS|\leq m$. By maximality of $\SSS$, 
every set in $\FF$ intersects at least one set of $\SSS$ in at least $\ell_1+1$ elements. Let 
$S\in \SSS$ be the set which intersects the most elements of $\FF$ in at least $\ell_1+1$ elements, 
and let \[\FF'=\{F\in\FF : |F\cap S|\geq \ell_1+1\}.\] 
By the pigeonhole principle $|\FF'|\geq |\FF|/m$. There are ${n\choose \ell_1+1}$ subsets of 
$S$ of size $\ell_1+1$, and every set in $\FF'$ contains
at least one such subset, so again by the pigeonhole principle there exists a set 
$S'\subseteq S$ such that $|S'|=\ell_1+1$ and the link at $S'$ in $\FF'$ satisfies 
\begin{align}|\FF'_{S'}|\geq \frac{|\FF|}{m{n\choose \ell_1+1}}
> \frac{(n-\ell_1-1)!m^{s-1}}{(\ell_2-\ell_1)!(\ell_3-\ell_2)!
\cdots(\ell_s-\ell_{s-1})!(n-\ell_s-1)!}\label{f-s-size}.\end{align}
Let $L'=\{\ell_2-\ell_1-1,\dots,\ell_s-\ell_1-1\}$. We observe that $\FF'_{S'}$ is an 
$(n-\ell_1-1)$-uniform, $L'$-intersecting family, and $|L'|=s-1$. Thus, 
by Inequality~(\ref{f-s-size}) and induction $\FF'_{S'}$ 
contains an $r$-sunflower (note that $m$ still satisfies the requirements of the induction hypothesis). 
Let $F_1,\dots,F_r\in \FF'_{S'}$ 
be an $r$-sunflower, then taking $F_1\cup S',\dots,F_r\cup S'\in \FF$ gives an $r$-sunflower.
\end{proof}
\no Theorem~\ref{l-int} follows immediately from Claim~\ref{l-int-claim} using the bound 
${n\choose m_1,\dots,m_k}\leq k^n$ for multinomial coefficients.
\end{proof}


\section{Proof of Theorem~\ref{intersecttheo}}\label{intersect}

We proceed using a similar argument to the main theorem of \cite{alwz}.
We start by stating some definitions from \cite{alwz}, so that we may apply the results. 

\begin{defi}
We say that an $n$-uniform family $\FF$ over $X$ is $\kappa$-spread if $|\FF|\geq \kappa^n$ 
and for all $T\subseteq X$ with $|T|\leq n$ we have $|\FF_T|\leq \kappa^{-|T|}|\FF|$
\end{defi}

We introduce weight functions, so that we can deal with multiset families (this is
not strictly necessary, but it makes it easier to apply the results of Alweiss et al. \cite{alwz}).

\begin{defi}
A function $\sigma: \FF\to \QQ$ is a weight function on a set family $\FF$ if it maps 
each set in $\FF$ to a rational weight, such that not all sets have weight zero.  
Moreover we define $\sigma(\SSS)=\sum_{S\in\SSS}\sigma(S)$ for a set family 
$\SSS\subseteq \FF$.
\end{defi}

Our next definition generalizes the idea of $\kappa$-spread using
weight functions.

\begin{defi}
We say that a set family $\FF$ over $X$, and corresponding weight function $(\FF,\sigma)$, is 
$\mathbf s$-spread if $\mathbf s=(s_0;s_1,\dots,s_n)$ satisfies 
$s_0\geq s_1\geq\cdots\geq s_n\geq0$ with $\sigma(\FF)\geq s_0$ and  
for every set $T\subseteq X$ the subfamily $\mathcal {T}=\{F\in \FF: T\subseteq F\}$, satisfies
$\sigma(\mathcal {T})\leq s_{|T|}$.
\end{defi}

Now we define $(\alpha,\beta)$-satisfying families, for the probabilistic
arguments that follow. We write $R\sim\UU(X,\alpha)$ whenever $R\subseteq X$
is generated by taking each element of $X$ uniformly and independently at 
random with probability $0\leq \alpha\leq 1$.

\begin{defi}\label{sat}
Let $0< \alpha,\beta< 1$. A family $\FF$ is $(\alpha,\beta)$-satisfying if given 
$R\sim \UU(X,\alpha)$,
\[\PP_R(\exists S\in \FF, S\subseteq R)> 1- \beta.\]
\end{defi}

Finally, we say that a family $\FF$ is \emph{$\mathbf s$-spread} if there exists a weight 
function $\sigma$ such that $(\FF,\sigma)$ is $\mathbf s$-spread, and 
a weight profile $\mathbf s$ is \emph{$(\alpha,\beta)$-satisfying} if any $\mathbf s$-spread
family $\FF$ is $(\alpha,\beta)$-satisfying. Using these definitions, we 
can now state the following lemmas. 

\begin{lemma}[\text{\cite[Lemma 1.6]{alwz}}]
\label{sat-disjoint}
If $\FF$ is a $(1/r,1/r)$-satisfying family, and $\emptyset\notin\FF$, then $\FF$ contains $r$ 
pairwise disjoint sets. 
\end{lemma}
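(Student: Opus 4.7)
The plan is to exhibit the $r$ disjoint sets via a random partition argument. I would partition the ground set $X$ into $r$ pairwise disjoint pieces $X_1,\dots,X_r$ by assigning each element of $X$ to exactly one part, chosen uniformly and independently. The crucial observation is that the marginal distribution of each $X_i$ is precisely $\UU(X,1/r)$, since every element of $X$ lands in $X_i$ with probability $1/r$ independently of the others.

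Next I would apply the $(1/r,1/r)$-satisfying hypothesis to each $X_i$ in turn. By Definition~\ref{sat}, the probability that no $S \in \FF$ satisfies $S \subseteq X_i$ is strictly less than $1/r$. Although the parts $X_1,\dots,X_r$ are not mutually independent, the union bound does not require independence, so the probability that some $X_i$ contains no member of $\FF$ is strictly less than $r \cdot (1/r) = 1$. Hence with positive probability every $X_i$ simultaneously contains some $S_i \in \FF$, and in particular such a partition exists.

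Fixing any such partition, for each $i$ I choose $S_i \in \FF$ with $S_i \subseteq X_i$. Since the $X_i$ are pairwise disjoint, so are the $S_i$, and the assumption $\emptyset \notin \FF$ forces each $S_i$ to be nonempty, so the $S_i$ are in fact $r$ distinct pairwise disjoint members of $\FF$. The only place where the hypothesis is used tightly is the strict inequality in Definition~\ref{sat}: this is exactly what keeps the union bound strictly below $1$ and guarantees a positive-probability success event. There is no serious obstacle beyond that bookkeeping, which is precisely what the strict inequality in the definition of $(\alpha,\beta)$-satisfying was set up to allow.
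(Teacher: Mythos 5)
Your argument is correct: the random equipartition of $X$ into $r$ parts, the observation that each part is marginally distributed as $\UU(X,1/r)$, the union bound using the strict inequality in Definition~\ref{sat}, and the use of $\emptyset\notin\FF$ to rule out coinciding $S_i$ together give a complete proof. The paper cites this statement from Alweiss et al.\ without reproducing a proof, and your argument is precisely the standard one for that cited lemma, so there is nothing further to reconcile.
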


\begin{lemma}[\text{\cite[Lemma 4]{rao}}]
\label{logn-spread-sat}
Let $0< \alpha,\beta< 1/2$. There exists a universal constant $C>1$ such that
if $\kappa=\kappa(n,\alpha,\beta) = C\log(n/\beta)/\alpha$ and a multiset family $\FF$ over 
$X$ is a $\kappa$-spread, $n$-uniform family then
$\FF$ is $(\alpha,\beta)$-satisfying.
\end{lemma}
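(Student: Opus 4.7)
The plan is to follow the iterated-refinement framework of Alweiss, Lovett, Wu and Zhang, with the slicing simplification due to Rao, and to prove the lemma by induction on the uniformity $n$. The strategy is to expose $R$ in small independent slices, and at each slice either declare victory (the exposed set already contains some $S\in\FF$) or ``refine'' by passing to a link on a strictly smaller ground set.

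Concretely, decompose $R \sim \UU(X,\alpha)$ as the union of $t = \lceil C_1\log(n/\beta)\rceil$ independent samples $R_1,\dots,R_t \sim \UU(X,\delta)$, where $\delta$ is chosen so that $1-(1-\delta)^t \le \alpha$; this forces $\delta = \Theta(\alpha/\log(n/\beta)) = \Theta(1/\kappa)$ once the absolute constant $C$ in the hypothesis is taken large enough. The heart of the argument is a one-step refinement lemma: if $\FF'$ is any $\kappa$-spread, $n'$-uniform family on some $X'\subseteq X$, and $R'\sim\UU(X',\delta)$, then with probability at least $1-\beta/n$ either $R'\supseteq S$ for some $S\in\FF'$, or there is a nonempty $T\subseteq R'$ such that the link $\FF'_T$ is again $\kappa$-spread on $X'\setminus T$. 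To establish this, weight each candidate $T\subseteq X'$ by the probability $\delta^{|T|}$ that $T\subseteq R'$ and use the spread bound $\sigma(\FF'_T)\le \kappa^{-|T|}\sigma(\FF')$ (together with $\delta\kappa$ being a suitably small constant) to show that the expected total weight of ``bad'' $T$'s---those whose link fails either to be $\kappa$-spread or to meet $R'$---is at most $\beta/n$. Markov then converts this into the claimed probability bound.

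With the refinement lemma in hand, iterate: starting from $\FF$ on $X$, apply it to $R_1$; if we refine via some $T_1$, replace $\FF$ with $\FF_{T_1}$ and $X$ with $X\setminus T_1$, then apply the lemma to $R_2$, and so on. Since every refinement strictly decreases the uniformity, after at most $n$ rounds we must be in the victory case, and $R_1\cup\cdots\cup R_i\subseteq R$ contains a set of the original family $\FF$. A union bound over the at most $n$ bad events yields total failure probability at most $n\cdot\beta/n=\beta$, as required.

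The main obstacle is the one-step refinement lemma, and specifically showing that a single refinement preserves spread with the \emph{same} constant $\kappa$ rather than degrading it---any multiplicative loss per round would force $\kappa$ to absorb an extra factor of $n$ or $\log n$ and break the stated bound $\kappa=C\log(n/\beta)/\alpha$. This is exactly where Rao's improvement over Alweiss et al.\ lies: by choosing the weight function $\sigma$ carefully and carrying out the first-moment estimate at ``subset'' rather than ``element'' granularity, one avoids a spurious $\log n$ factor and obtains the required bound with no amplification through the induction.
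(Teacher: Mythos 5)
The paper does not prove this lemma at all: it is imported verbatim as Lemma~4 of Rao~\cite{rao}, so your sketch can only be compared against Rao's argument (and the closely related one of Alweiss et al.). Your high-level frame is right --- slice $R$ into $t=O(\log(n/\beta))$ independent pieces of density $\Theta(1/\kappa)$ and iterate a one-round lemma --- but the sketch has two genuine gaps. The first is that your round-counting does not close. You budget $t=\lceil C_1\log(n/\beta)\rceil$ slices, yet your refinement step only guarantees that the uniformity \emph{strictly decreases}, and you then appeal to ``after at most $n$ rounds'' with a union bound of $n\cdot\beta/n$. With only $O(\log(n/\beta))$ slices you cannot afford $n$ rounds; and if you genuinely used $n$ slices, each would need density about $\alpha/n$, forcing $\kappa=\Omega(n/\alpha)$ and destroying the claimed bound. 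The engine of both proofs is that one round shrinks the relevant quantity --- the size of the smallest uncovered fragment $\min_{S}|S\setminus(R_1\cup\cdots\cup R_i)|$ --- by a constant \emph{factor} in expectation, so that $O(\log n)$ rounds drive it below $1$ and the remaining $O(\log(1/\beta))$ rounds (or a terminal Markov step) control the failure probability. That multiplicative decay is exactly what the good/bad-pair counting (or Rao's encoding) delivers, and it is the step your sketch replaces with the much weaker ``decreases by at least one.''

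The second gap is the form of the one-step lemma itself. The claim that with probability $1-\beta/n$ there is a nonempty $T\subseteq R'$ whose link $\FF'_T$ is again $\kappa$-spread is not what either paper proves, and it is not salvageable as stated: links of $\kappa$-spread families need not be spread, and the first-moment computation you describe (weighting each $T$ by $\delta^{|T|}$ and using $\sigma(\FF'_T)\le\kappa^{-|T|}\sigma(\FF')$) only controls $T$'s whose links are too \emph{large}; it cannot certify that some link is well-spread. What the argument actually passes to after one round is the weighted multiset family of small fragments $\{S'\setminus W\}$ over the witnesses $S'$ of good pairs $(W,S)$ (exactly the mechanism in Definition~\ref{goodbad} and Lemma~\ref{good-size-d} of this paper), whose spreadness is inherited because only a small fraction of pairs are bad. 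Rao's improvement is not a cleverer preservation of $\kappa$-spreadness through links; it is an encoding of bad pairs $(W,S)$ that is always charged against the \emph{original} $\kappa$-spread family, so that no spreadness needs to be propagated at all. If you want a self-contained proof, the statement to aim for per round is: for $W\sim\UU(X,\delta)$ with $\delta\kappa$ a large constant, there is a choice $S(W)\in\FF$ with $\EE\,|S(W)\setminus W|$ at most a constant fraction of the previous fragment size, proved by showing that a bad pair $(W,S)$ admits a compressed encoding.
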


The next lemma, which is the main technical result of this section, will allow us to use 
Lemma~\ref{logn-spread-sat} on sets of size $d$ for a $d$-intersecting family.

\begin{lemma}\label{good-size-d}
Let $\FF$ be a $d$-intersecting, $n$-uniform set family that is $\mathbf s$-spread,
such that $\mathbf s:=(|\FF|;s_1,\dots,s_d,1,\dots,1)$. Let $p,\delta>0$, and suppose that 
$\mathbf s'=((1-\delta)|\FF|;s_1,\dots,s_d)$ is $(\alpha',\beta')$-satisfying. Then
$\FF$ is $(\alpha,\beta)$-satisfying for 
\[\alpha = p + (1-p)\alpha' \textrm{, and \,} \beta=\beta'+(2/p)^n/(\delta |\FF|)\]
\end{lemma}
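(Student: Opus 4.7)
The plan is to realize $R\sim\UU(X,\alpha)$ via the two-stage sampling $R = R_1\cup R_2$ with $R_1\sim\UU(X,p)$ and $R_2\sim\UU(X,\alpha')$ independent; each $x\in X$ then lies in $R$ with probability $1-(1-p)(1-\alpha')=\alpha$. Given $R_1$ I will analyze a ``residue'' family $\FF^{R_1}$ of sets of size at most $d$, and use the $(\alpha',\beta')$-satisfying hypothesis for $\mathbf s'$ to show that $R_2$ covers some residue with high probability; this lifts back to $R$ covering the corresponding $F\in\FF$.

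Concretely, I set
\[
\FF^{R_1}:=\{\,F\setminus R_1\ :\ F\in\FF,\ |F\setminus R_1|\leq d\,\},
\]
viewed as a multiset, with weight function $\sigma^{R_1}$ giving each element its multiplicity. Note that if $S=F\setminus R_1\in\FF^{R_1}$ and $S\subseteq R_2$, then $F\subseteq R_1\cup R_2=R$. The per-set spread bounds of $\mathbf s'$ hold for every $R_1$: for any $T\subseteq X$ with $|T|\leq d$, the weight of residues containing $T$ is $0$ if $T\cap R_1\neq\emptyset$, and is otherwise at most $|\{F\in\FF:T\subseteq F\}|\leq s_{|T|}=s'_{|T|}$ by the $\mathbf s$-spread of $\FF$; residues of size larger than $d$ do not exist in $\FF^{R_1}$ by construction. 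Thus $(\FF^{R_1},\sigma^{R_1})$ is $\mathbf s'$-spread exactly when its total weight $|\{F\in\FF:|F\setminus R_1|\leq d\}|$ reaches $(1-\delta)|\FF|$.

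The crucial step, and main obstacle, is bounding the probability that the deficit $D(R_1):=|\{F\in\FF:|F\setminus R_1|>d\}|$ exceeds $\delta|\FF|$. Markov gives $\PP(D>\delta|\FF|)\leq\EE[D]/(\delta|\FF|)$, so it suffices to establish $\EE_{R_1}[D]\leq (2/p)^n$. The argument exploits $d$-intersection, equivalently $s_{d+1}=1$: each $(d+1)$-subset of $X$ lies in at most one $F\in\FF$, so distinct bad $F$'s furnish distinct witness $(d+1)$-subsets of $F\setminus R_1$. A weighted count over pairs $(F,B)$ with $B\subseteq F$, $|B|=d+1$, and $B\cap R_1=\emptyset$, combined with the binomial structure of $|F\cap R_1|$, produces the $|\FF|$-free bound $(2/p)^n$. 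Getting the exact constant $(2/p)^n$ rather than a bound that scales with $|\FF|$ is the delicate part; the naive Markov-on-indicator estimate only yields a bound linear in $|\FF|$, so one really needs the interplay between $d$-intersection and the spread profile here.

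On the complementary good event, which has probability at least $1-(2/p)^n/(\delta|\FF|)$, the family $(\FF^{R_1},\sigma^{R_1})$ is $\mathbf s'$-spread, so by the $(\alpha',\beta')$-satisfying hypothesis,
\[
\PP_{R_2}\!\left(\exists\,S\in\FF^{R_1},\ S\subseteq R_2\ \big|\ R_1\right)\ \geq\ 1-\beta'.
\]
A union bound over the good event and the $R_2$-step then yields
\[
\PP_R(\exists F\in\FF,\,F\subseteq R)\ \geq\ (1-\beta')\Bigl(1-\tfrac{(2/p)^n}{\delta|\FF|}\Bigr)\ \geq\ 1-\beta'-\tfrac{(2/p)^n}{\delta|\FF|}\ =\ 1-\beta,
\]
establishing that $\FF$ is $(\alpha,\beta)$-satisfying.
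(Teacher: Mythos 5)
Your two-round sampling framework ($R=R_1\cup R_2$ with $R_1\sim\UU(X,p)$, $R_2\sim\UU(X,\alpha')$) matches the paper's strategy, but the central estimate fails as stated. You define the deficit $D(R_1)=|\{F\in\FF:|F\setminus R_1|>d\}|$ and claim $\EE_{R_1}[D]\leq(2/p)^n$. This is false: $\EE[D]=\sum_{F\in\FF}\PP(|F\setminus R_1|>d)=|\FF|\cdot\PP(\mathrm{Bin}(n,1-p)>d)$, which is a fixed positive constant times $|\FF|$ whenever $d<n$. Already for a family of $N$ pairwise disjoint $n$-sets (which is $d$-intersecting and $\mathbf s$-spread with $s_1=\cdots=s_n=1$), taking $p=1/2$, $d=1$, $n=10$ gives $\EE[D]\approx 0.99\,N\gg (2/p)^n$ once $N$ is large. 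No interplay with $d$-intersection or the spread profile can rescue this, because $D$ is the wrong quantity: it ignores the possibility that a set $F$ with a large residue is nonetheless ``served'' by a \emph{different} set $S'\in\FF$ whose residue $S'\setminus R_1$ is contained in $F\setminus R_1$ and has size at most $d$ (in the disjoint example, any $F'\subseteq R_1$ serves every $F$ with the empty residue). This is exactly what the paper's Definition~\ref{goodbad} captures: $(W,S)_d$ is bad only if \emph{no} witness $S'$, including $S'\neq S$, exists. Correspondingly, the residual family must consist of the witness residues $S'\setminus R_1$ attached to the good sets, not just those residues $F\setminus R_1$ that happen to be small; otherwise its total weight typically falls far short of $(1-\delta)|\FF|$.

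With the correct notion of badness, the bound of $(2/p)^n\binom{x}{px}$ on the number of bad pairs does not come from the observation that each $(d+1)$-subset lies in at most one member of $\FF$; it comes from an encoding (compression) argument. One records $W\cup S$ (at most $p^{-n}\binom{x}{px}$ choices) and $W\cap S$ (at most $2^n$ choices), and then uses badness together with $d$-intersection to show that $S$ is the \emph{unique} member of $\FF$ contained in $W\cup S$ (any $S'\subseteq W\cup S$ has $S'\setminus W\subseteq S\setminus W$, so badness forces $|S'\cap S|>d$ and hence $S'=S$), which makes the pair decodable. Markov applied to the number of genuinely bad sets then yields the $(2/p)^n/(\delta|\FF|)$ failure probability, and your final union-bound step goes through. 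As written, though, your proof has a genuine gap at its key step, and the $(d+1)$-subset counting you sketch cannot close it.
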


Before proving Lemma~\ref{good-size-d}, we define a notion of ``good'' and ``bad'' 
set pairs. Bounding the number of bad pairs is the key idea in this proof. 

\begin{defi}\label{goodbad}
Let $\FF$ be an $n$-uniform family over $X$ and let $W\subseteq X$. Given $S\in \FF$
and $w\in [n]$ we call the set pair $(W,S)_w$ good if there exists a set $S'\in \FF$ 
(possibly with $S'=S$) such that 
\[S'\bs W\subseteq S\bs W\text{, and } |S'\bs W|\leq w\]
We call $S'$ a \emph{witness} to the goodness of $(W,S)_w$.
We call a set bad otherwise. 
\end{defi}

We use Definition~\ref{goodbad} with $w = d$ for our purposes. 

\begin{proof}[Proof of Lemma~\ref{good-size-d}]
Let $\FF$ be a $d$-intersecting, $n$-uniform set family over $X$, with $|X|=x$. 
We begin by bounding the number of bad set pairs using an encoding inspired by~\cite{alwz}. 
Suppose $(W,S)_d$ is a bad pair for $W\subseteq X$ where $|W|=px$ and  $S\in \FF$.
First, we consider all possible sets $W\cup S$. Since $|S|=n$ and $|W|=px$, we know that
$px\leq |W\cup S|\leq px+n$. Hence, there are 
\[\sum_{i=0}^n{x\choose px+i} \leq \sum_{i=0}^n\left(\frac{1-p}{p}\right)^i {x\choose px} 
\leq \left(\frac{1-p}{p}+1\right)^{n} {x\choose px}  =  p^{-n}{x\choose px} \]
possible sets for $W\cup S$. Let $W\cup S$ be the first piece of information in the encoding.
There are $2^n$ possible values for $W\cap S$ since $|S|= n$. Let 
$W\cap S$ be the second piece of information in the encoding. Now we claim that 
given these two pieces of information and the additional information that the 
corresponding set pair $(W,S)_d$ is bad, we can reconstruct 
$(W,S)_d$. Indeed, if we knew $S$ in addition to this information, we could clearly reconstruct 
$(W,S)_d$. Let $S'\in \FF$ and suppose that
$S'\subseteq W\cup S$, so that $S'\bs W\subseteq S\bs W$.
Since $(W,S)_d$ is bad, it must be that $|S'\bs W|>d$, hence
$|S'\cap S|> d$. Since $\FF$ is $d$-intersecting, $S' = S$ and
there is a unique set $S$ for a given
$W\cup S$, which can be computed by taking the unique set $S\subseteq W\cup S$. 
Since we also know $W\cap S$, we can compute $W$. 
Therefore, there are at most $(2/p)^n{x\choose px}$ bad set
pairs. Since there are ${x\choose px}$ possible sets $W$, the expected
number of bad set pairs for a given $W$ is $(2/p)^n$. Let 
$\SSS(W)=\{S\in \FF: (W,S)_d\text{ is bad}\}$. 
By Markov's inequality, the probability over $W$ drawn uniformly from ${X\choose px}$, the
set of subsets of $X$ with size $px$, satisfies 
\begin{align}\PP_W(|\SSS(W)|\geq \delta |\FF|)\leq 
\frac{(2/p)^n}{\delta|\FF|}\label{markov-bad}\end{align}
When $|\SSS(W)|\leq \delta|\FF|$ we
define a new $d$-uniform multiset family $\FF'$ over $X\bs W$ which is 
$\mathbf s'=((1-\delta)|\FF|;s_1,\dots,s_d)$-spread.
The rest of the proof follows immediately from the 
arguments in Section 2.1 of~\cite{alwz}. 
\end{proof}

\begin{proof}[Proof of Theorem~\ref{intersecttheo}]
We roughly follow the argument used in \cite{alwz, rao}. Let $\FF$ be a $d$-intersecting, 
$n$-uniform family over $X$ of size $|\FF|> (4r)^n[C r\log(rd)]^d$, for $C$ to be chosen later. 
Let $T\subseteq X$ be the largest set with $|T|\leq d$ (possibly $|T|=0$) such that 
\[|\FF_T|\geq [C r \log (rd)]^{-|T|}|\FF|.\] 
We claim that $\FF_T$ is $\kappa =C r \log (rd)$-spread. Indeed, if $|T|=d$ then $\FF_T$
is a family of pairwise disjoint sets, and otherwise we can find a link at $T'\subseteq X\bs T$ such 
that $|T'|>0$ and
\[(\FF_T)_{T'}\geq [C r \log (rd)]^{-|T'|}|\FF_T|.\] 
But then, taking $T'\cup T$ gives a larger set with 
\[|\FF_{T\cup T'}|\geq [C r \log (rd)]^{-|T\cup T'|}|\FF|,\]
a contradiction. Hence,
$\FF_T$ is $\mathbf s$-spread and $(d-|T|)$-intersecting for weight profile 
\[{\mathbf s}=(|\FF_T|; |\FF_T|/\kappa,\dots,|\FF_T|/\kappa^{d-|T|},1,\dots, 1),\] taking 
$\sigma(F)=1$ for all $F\in\FF_T$. 
Let $\mathbf s'=(|\FF_T|/2;|\FF_T|/\kappa,\dots,|\FF_T|/\kappa^{d-|T|})$. 
As in~\cite{alwz}, we observe that if a family is 
$(|\FF_T|/2;|\FF_T|/\kappa,\dots,|\FF_T|/\kappa^{d-|T|})$-spread,
then it is also $\mathbf s''= (|\FF_T|;|\FF_T|/\kappa',\dots,|\FF_T|/\kappa'^{(d-|T|)})$-spread 
for $\kappa'=\kappa/2$. By Lemma~\ref{logn-spread-sat}, $\mathbf s''$ (and hence also 
$\mathbf s'$) is $(\frac{1}{2r},\frac{1}{2r})$-satisfying for $C$ chosen sufficiently large. 
Hence, by Lemma~\ref{good-size-d} with $\delta = 1/2$ and $p=\frac{1}{2r}$,
we know that $\mathbf s$ is $(1/r,1/r)$-satisfying choosing $C$ sufficiently large according
to the result of the lemma. Therefore, by Lemma~\ref{sat-disjoint},
this implies $\FF_T$ contains $r$ pairwise disjoint sets. Let 
$F_1,\dots,F_r\in\FF_T$ be pairwise disjoint. Then $F_1\cup T,\dots, F_r\cup T\in \FF $ 
is an $r$-sunflower.
\end{proof}


\section{Discussion}\label{discussion}
It is easy to see that Theorem~\ref{intersecttheo} implies a stronger statement than the best known 
bound whenever $d=o(n)$. The original bound of Erd\H{o}s and Rado can be directly
applied to $d$-intersecting sets families to achieve a bound of $(r-1)^{d+1}n!/(n-d)!$.
A natural question is when Theorem~\ref{intersecttheo} 
improves this trivial bound? There is some constant $c>0$ such that
\begin{align*} \frac{(r-1)^{d+1}n!}{(n-d)!} 
\geq \left[\frac{(r-1)n}{2}\right]^{c n /\log\log n}
\geq (4r)^n[Cr\log (rc n /\log\log n)]^{c n /\log\log n}
\end{align*} for $d\geq c n /\log\log n$ and $n$ sufficiently large. 
Hence, in this regime, we improve the best known bound. In particular, this motivates
Corollary~\ref{intcoro} and Question~\ref{lin-int-quest}.\\

{\noindent \bf Acknowledgement:} 
We thank Ryan Alweiss, Noga Alon, Robert Sedgewick and Stephen Melczer for helpful suggestions,
and to Lutz Warnke for telling us about~\cite{bcw}.
Thank you to The Fifty Five Fund for Senior Thesis 
Research (Class 1955) Fund, Princeton University for partially supporting this research.\\



\begin{thebibliography}{99}

\bibitem{alwz}
R. Alweiss, S. Lovett, K. Wu, J. Zhang, Improved bounds for the sunflower lemma, 
\textit{Ann. of Math.} (2) 194 (2021), no. 3, 795–815

\bibitem{bcw}
T. Bell, S. Chueluecha and L. Warnke, Note on sunflowers, \textit{Discrete Mathematics}, 344 (2021), no. 7, 112367

\bibitem{deza}
M. Deza, Solution d'un probl\`eme de Erd\H{o}s-Lov\'asz, \textit{J. Comb. Theory, Ser. B} 16 (2), 1974,
166–167.

\bibitem{df}
M. Deza and P. Frankl. Every large set of equidistant $(0,+1,-1)$-vectors forms a
sunflower. \textit{Combinatorica}, 1(3):225–231, 1981.

\bibitem{er}
P. Erd\H{o}s and R. Rado. Intersection theorems for systems of sets. \textit{Journal of the London
Mathematical Society}, 35(1):85-90, 1960.

\bibitem{gh}
G. Heged\H{u}s, Sunflowers and $L$-intersecting families, \textit{AKCE International Journal of 
Graphs and Combinatorics}, 17:1, 402-406, 2019.

\bibitem{rao}
A. Rao, Coding for Sunflowers, \textit{Discrete Analysis}, 2020:2.


\end{thebibliography}
\end{document}